% -*- mode: latex; TeX-PDF-mode: t; -*-
\documentclass[11pt]{article}
\usepackage{fullpage}
\usepackage{diagbox}
\usepackage{mathtools}
\usepackage{bbm}
\usepackage{latexsym}
\usepackage{epsfig}
\usepackage{amsmath,amsthm,amssymb,enumerate}
\usepackage[normalem]{ulem}

\usepackage[a-1b]{pdfx}
\usepackage{hyperref}

\def\es{\emptyset}

\parindent 0in
\parskip 2.5ex
\usepackage{color}

%\addtolength{\footheight}{-1 in}

%\newcommand{\stack}[2]{\genfrac{}{}{0pt}{}{#1}{#2}}

\def\cH{{\mathcal H}}
% Greek letters

\def\a{\alpha} \def\b{\beta}  
\def\e{\varepsilon} \def\f{\phi}   \def\g{\gamma}
\def\G{\Gamma}

  \def\s{\sigma} 
   \def\Om{\Omega}

%Layouts
\newtheorem{theorem}{Theorem}
\newtheorem{lemma}[theorem]{Lemma}
\newtheorem{corollary}[theorem]{Corollary}

\newtheorem{claim}{Claim}
\newtheorem{definition}{Definition}

%%%%%%%%%%%%%%%%%%%%%%math stuff%%%%%%%%%%%%%%%%

\newcommand{\rdup}[1]{{\left\lceil #1\right\rceil }}

\newcommand{\brac}[1]{\left(#1\right)}

\newcommand{\bfrac}[2]{\left(\frac{#1}{#2}\right)}

\newcommand{\set}[1]{\left\{#1\right\}}
\def\sm{\setminus}

\def\es{\emptyset}

\def\Pr{\mathbb{P}}

%%%%%%%%%%%%%%%%%%%%%%%%%%%%%%%%%%%%%%%%%%%%%%%%%%%%%%%%%%%%%%%%%%%%%%%%
%specific to this paper
\newcommand{\ignore}[1]{}

\def\cB{{\mathcal B}}

\def\cD{{\mathcal D}}

\def\cH{{\mathcal H}}

\def\cO{{\mathcal O}}

\newcommand{\beq}[2]{\begin{equation}\label{#1}#2\end{equation}}

\usepackage{tikz}
\usetikzlibrary{decorations.pathmorphing}
\usetikzlibrary{positioning}
\usetikzlibrary{arrows,automata}
\usetikzlibrary{shapes.misc}
\usetikzlibrary{backgrounds}
\usetikzlibrary{arrows,shapes}

\def\dg{{\rm dang}}
\def\bdg{\overline{\dg}}

\def\bc{{\bf c}}
\def\br{{\bf r}}

%Comments

\begin{document}

\date{}
\title{Some Maker-Breaker games on graphs/hypergraphs}
\author{Patrick Bennett\thanks{Department of Mathematics, Western Michigan University, Kalamazoo MI 49008-5248, Research supported in part by Simons Foundation Grant \#848648.}\and Alan Frieze\thanks{Department of Mathematical Sciences, Carnegie Mellon University, Pittsburgh PA15213,Research supported in part by NSF grant DMS2341774.}\and Wesley Pegden\thanks{Department of Mathematical Sciences, Carnegie Mellon University, Pittsburgh PA15213,Research supported in part by NSF grant DMS1700365.}}

\maketitle
\begin{abstract}
    We consider some biased Maker-Breaker games. Starting with the complete $k$-uniform hypergraph on $n$ vertices, at each turn Maker claims one edge, and then Breaker claims $b$ edges. Maker's goal is to obtain a set of edges having some increasing property. We consider the case where Maker's desired property is minimum degree $m$, or having a full-rank incidence matrix, or having a Berge Hamilton cycle or having a perfect matching or a Hamilton $\ell$-cycle. 
\end{abstract}

\section{Introduction}
In this paper we consider some Maker-Breaker games played on the edges of the complete $k$-uniform hypergraph $H=H_{n,k}$, sometimes referred to as the {\em board}. There are two players, Maker and Breaker, who take turns in acquiring the edges of $H$. In one turn Maker acquires one edge and in one turn Breaker acquires $b$ edges for some fixed number $b$ which we call the {\em bias}. Maker's aim is to acquire a copy of some particular class of hypergraph and Breaker's aim is to prevent this. There will be a value $b^*$, which we call the {\em threshold bias}, such that Maker wins if $b\leq b^*$ and Breaker wins if $b>b^*$. We discuss estimates for $b^*$ for a variety of games. 

The study of biased Maker-Breaker games begins with Chv\'atal and Erd\H{o}s \cite{CE}. There are many results in this area and the book by  Hefetz, Krivelevich, Stojakovi\'c and Szab\'o \cite{Book} provides a very nice introduction to the area.

We first consider the Minimum Degree Game, which is one of the most fundamental of games. Here Maker's goal is to build a hypergraph with minimum degree $m$. (The degree of a vertex $v$ is the number of edges that contain $v$.) For graphs, Gebauer and Szab\'o \cite{GS} asymptotically estimated the threshold bias. We generalize their result to $k$-uniform hypergraphs.  
\begin{theorem}\label{thdegree}
Suppose first that $m=o(\log n)$. Let $b=\frac{\b N}{\log n}$ where $N=\binom{n}{k-1}$. Then for any fixed $\e > 0$, Maker wins the Minimum Degree Game (i.e.~Maker gets minimum degree $m$) if $\b\leq 1-\e$, and Breaker wins if $\b\geq 1+\e$. In general, if $m=\Omega(\log n)$, we only claim that Maker wins if $b\leq \frac{\g N}{km+\log n}$ for $\g<1$.
\end{theorem}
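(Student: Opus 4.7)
The plan is to carry the tight two-sided analysis of Gebauer--Szab\'o~\cite{GS} for the graph Minimum Degree Game over to $k$-uniform hypergraphs, using the classical Box Game as the common engine.

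\textbf{Breaker's direction ($b\ge (1+\e)N/\log n$).}  I reduce to a Chv\'atal--Erd\H{o}s Box Game. Associate to each vertex $v$ the ``box'' $E(v)$ consisting of the $\binom{n-1}{k-1}=(1-o(1))N$ edges through $v$; if Breaker ever claims at least $N-m+1$ members of some $E(v)$ then $\deg_M(v)\le m-1$ and Breaker has won. Breaker now plays the standard BoxMaker strategy on these $n$ (overlapping) boxes. The overlap actually helps Breaker: each hyperedge she claims lies in $k$ boxes, so her progress is accelerated compared with the disjoint Box Game. The usual calculation then delivers a full box as soon as $b\ge(1+\e)N/\log n$.

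\textbf{Maker's direction ($b\le (1-\e)N/\log n$, and the general bound).}  I use a Gebauer--Szab\'o-style danger function. For each $v$ with $\deg_M(v)<m$ let $\dg(v)$ be the number of Breaker edges at $v$, and work with
$$\F \;=\; \sum_{v:\,\deg_M(v)<m}(1+\delta)^{\dg(v)}$$
for a suitable $\delta\approx 1/b$. At each Maker turn, pick an unsaturated vertex $v^*$ of maximum $\dg$, and play an edge at $v^*$ whose remaining $k-1$ endpoints are also unsaturated. A Breaker round increases $\F$ by at most a factor $1+\delta$ on each of $b$ terms, while Maker's move either drops $(1+\delta)^{\dg(v^*)}$ when $v^*$ becomes saturated or caps the relevant summands; an Erd\H{o}s--Selfridge type inequality gives $\F<n$ throughout, which prevents any $\dg(v)$ from reaching $N-m+1$. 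Since Maker's game lasts at most $\lceil nm/k\rceil$ rounds (that many edges are needed for minimum degree $m$), substituting this into the potential bound yields exactly the general threshold $b\le \gamma N/(km+\log n)$, which collapses to $(1-\e)N/\log n$ when $m=o(\log n)$.

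\textbf{Main obstacle.}  The technically delicate point is showing that Maker's greedy vertex $v^*$ always admits a useable edge, i.e.\ one at $v^*$ with all remaining endpoints unsaturated and not already taken by Breaker. Up to $nm/k$ vertices may be saturated, and each excludes $\binom{n-2}{k-2}$ edges at $v^*$; together with Breaker's at most $N-m$ earlier edges at $v^*$, one must verify that the total exclusion is strictly less than $\binom{n-1}{k-1}$. This accounting is the genuine hypergraph difference from $k=2$, and it is precisely where the factor $k$ appears in the denominator $km+\log n$.
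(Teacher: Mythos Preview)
Your reduction to a Box Game on all $n$ vertex-boxes does not work as stated: the assertion that ``overlap actually helps Breaker'' is unjustified and in fact false as an argument. Every hyperedge lies in $k$ boxes, so yes, each of Breaker's $b$ moves hits $k$ boxes---but each of Maker's moves \emph{also} hits $k$ boxes. The effective bias is unchanged, and Maker (playing BoxBreaker) can now protect $k$ boxes per move instead of one. The Chv\'atal--Erd\H{o}s criterion $b\ge y/\log x$ is proved for \emph{disjoint} boxes and does not transfer. The paper handles this by a preliminary Phase~1 in which Breaker first builds a set $S$ of $n/\log n$ vertices, untouched by Maker, such that Breaker already owns every edge meeting $S$ in at least two vertices. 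After this, the residual boxes $\{e: |e\cap S|=1, v\in e\}$ for $v\in S$ are genuinely disjoint, and the Box Game applies with $|S|$ boxes of size at most $N$. This disjointification step is the missing idea.

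\textbf{Maker's side: different route, and too sketchy to verify.} The paper does not use an exponential potential; it uses the Gebauer--Szab\'o linear danger $\dg(v)=d_B(v)-kb\,d_M^+(v)$ and tracks the \emph{average} danger over a shrinking set $J(i)$ of future eased vertices, deriving a telescoping contradiction. Your Erd\H{o}s--Selfridge-style $\Phi=\sum (1+\delta)^{\dg(v)}$ could in principle work, but the sketch has problems: (i) requiring that all $k-1$ other endpoints of Maker's edge be unsaturated is not what is needed and may be infeasible late in the game---the paper only requires the edge to contain the single eased vertex $v^*$; (ii) the claim that the game lasts $\lceil nm/k\rceil$ rounds confuses the minimum edge count for minimum degree $m$ with the number of rounds Maker's strategy actually uses (the paper bounds it by $mn$); (iii) the one-line ``an Erd\H{o}s--Selfridge type inequality gives $\Phi<n$'' hides the entire analysis, including how Maker's move compensates for Breaker's $kb$ danger increments across $b$ edges each touching $k$ vertices. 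As written this is an outline of a possible alternative, not a proof.
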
 
Next we consider the {\em Matrix Rank Game}. We use the same board $H_{n,k}$ but we view it as an $n\times M,M=\binom{n}{k}$ 0/1 matrix $A_M$ over the boolean field $GF_2$.  Each edge $e=\set{i_1,i_2,\ldots,i_k}$ gives rise to a column $\bc_e$ that has a 1 in rows $i_1,i_2,\ldots,i_k$ and a 0 everywhere else. We view the set of $m$ edges acquired by Maker as an $n\times m$ 0/1 sub-matrix $A_m$. Maker's aim to build a matrix $A_m$ of ``full rank'', i.e. $n$ if $k$ is odd and $n-1$ if $k$ is even (note that when $k$ is even, rank $n$ is not possible since summing all rows gives the zero vector). We prove
\begin{theorem}\label{thmatrix}
Let $b=\frac{\b N}{\log n}$. Then for any fixed $\e > 0$, Maker wins the Matrix Rank Game if $\b\leq 1-\e$ and Breaker wins if $\b\geq 1+\e$.
\end{theorem}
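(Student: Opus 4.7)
The Breaker direction follows immediately from the Breaker side of Theorem~\ref{thdegree} applied with $m=1$. When $\b\geq 1+\e$, Breaker has a strategy to isolate some vertex $v$; the $v$-th row of Maker's incidence matrix is then zero, so its rank is at most $n-1$. For $k$ odd this already falls below the target $n$. For $k$ even, every Maker edge is a $k$-subset of $V\setminus\{v\}$ with $k$ even, so $\mathbf{1}_{V\setminus\{v\}}$ also annihilates every Maker column; the left kernel has dimension at least $2$, the rank is at most $n-2<n-1$, and Maker loses.

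For the Maker direction, I would dualize: Maker wins if and only if, for every ``threat'' $S$ with $\es\neq S\subsetneq V$ (modulo $S\leftrightarrow V\setminus S$ when $k$ is even), she has claimed at least one edge in $\cE_S:=\{e:|e\cap S|\text{ odd}\}$. Writing
\[
\Phi(S):=|\cE_S|=\sum_{i\text{ odd}}\binom{|S|}{i}\binom{n-|S|}{k-i},
\]
one checks that $\Phi(S)=N$ exactly for the singleton threats ($|S|=1$, plus $|S|=n-1$ when $k$ is even), while $\Phi(S)\geq(2-o(1))N$ for every other non-trivial $S$. The singleton threats are precisely the vertex-isolation threats of Theorem~\ref{thdegree}, so with $b\leq(1-\e)N/\log n$ Maker can run that theorem's Min Degree strategy---targeting some slowly growing minimum degree, e.g.\ $m=\log\log n$---and thereby defeat all of them. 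The plan is to strengthen this strategy so that it simultaneously defeats all non-singleton threats, using a single Gebauer--Szab\'o-style potential
\[
\Psi(t):=\sum_v(1+\d)^{m-d^M_v(t)}\;+\;\sum_{|S|\geq 2}w(|S|)(1+\d)^{-d^M_S(t)},
\]
where $d^M_v(t)$ is Maker's degree at $v$, $d^M_S(t)$ counts her edges in $\cE_S$, and $w(\cdot),\d$ are tuning parameters. Maker plays the edge that most decreases $\Psi$; because each non-singleton $\Phi(S)$ exceeds $N$ by at least a factor $2$, the singleton sum should dominate $\Psi(0)$ and Maker's choices are driven by the same pressures as in Theorem~\ref{thdegree}.

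The main obstacle is to bound the contribution of the $\sim 2^n$ non-singleton threats to $\Psi(0)$ sharply enough to preserve the tight constant $1$. I expect this to require splitting $|S|$ into three regimes. For $|S|=O(1)$ one has $\Phi(S)\approx|S|\cdot N$, and a single threat contributes roughly $n^{-|S|/\b}$ which, summed against $\binom{n}{|S|}\leq n^{|S|}$, yields $n^{|S|(1-1/\b)}=o(1)$ whenever $\b<1$. For intermediate $|S|$, one combines the lower bound $\Phi(S)\geq|S|\binom{n-|S|}{k-1}$ with $\binom{n}{|S|}\leq(en/|S|)^{|S|}$. For $|S|=\Theta(n)$, $\Phi(S)=\Theta(\binom{n}{k})\gg N\log n$, and the exponential factor dwarfs $\binom{n}{|S|}\leq 2^n$. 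A naive Beck bound $\sum_S 2^{-\Phi(S)/b}$ only gives a constant-factor bias, missing the factor $1/\log n$; the real work is to extend Gebauer--Szab\'o's danger-function machinery of Theorem~\ref{thdegree} to this richer family of threats while keeping the singleton threats as the binding constraint throughout.
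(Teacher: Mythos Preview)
Your Breaker direction is correct and matches the paper (including the even-$k$ refinement, which the paper omits but is implicit).

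Your Maker direction, however, is only a plan, and the paper takes a genuinely different and much shorter route. You propose a deterministic Beck/Gebauer--Szab\'o potential that simultaneously tracks all $2^n$ odd-cut threats, and you correctly identify the obstacle: a plain Beck criterion $\sum_S(1+\d)^{-\Phi(S)/b}$ loses the $1/\log n$ and only yields bias $\Theta(N)$, while the Gebauer--Szab\'o danger machinery is built for $n$ local threats, not exponentially many overlapping ones. You do not actually carry out the ``real work'' of merging the two, and it is not clear it can be done at the tight constant; each Maker edge hits $2^{n-1}$ of the sets $\cE_S$, so the greedy-decrease argument behind $\Psi$ does not obviously localize to the singleton threats the way you need.

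The paper avoids this entirely by going \emph{random}. In Phase~1 Maker runs the strategy of Theorem~\ref{thdegreegeneral} with a large constant $m$ and $\a=1-\e'$, but makes each ``arbitrary'' edge choice uniformly at random from the $\geq(1-\a)N$ available edges. A first-moment bound then shows that w.h.p.\ no dependency $S$ with $|S|\le\eta n$ (or, for odd $k$, $|S|\ge(1-\eta)n$) survives: if $i\in S$, each of $i$'s $m$ random edges must hit $S$ again, which costs a factor $\bigl(ks/((1-\a)n)\bigr)^m$, and summing $\binom{n}{s}$ of these is $o(1)$. In Phase~2 Maker simply picks uniformly random available edges for $n\log\log n$ further rounds; any dependency with $\eta n\le|S|\le(1-\eta)n$ has $\Omega(n^k)$ edges with exactly one endpoint in $S$, almost all still free, so Maker hits one with probability $1-e^{-\Omega(n\log\log n)}$, beating the $2^n$ union bound. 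For even $k$ one is done by complementation; for odd $k$ the large-$|S|$ case is handled symmetrically in Phase~1.

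In short: your approach aims for a single deterministic potential covering all threats at once, which is ambitious and unresolved at the sharp constant; the paper instead piggybacks on Theorem~\ref{thdegreegeneral} to get the constant for free, and handles the non-singleton threats by randomness plus a crude union bound, exploiting that they are all much larger than $N$.
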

Next we consider the {\em Berge Hamiltonicity Game}. A Berge Hamilton cycle (BHC) is a sequence $(v_1,e_1,v_2,e_2,\ldots,v_{n-1},e_n,v_n)$ of distinct vertices and edges where $V(H)=\set{v_1,v_2,\ldots,v_n}$ and $\set{v_i,v_{i+1}}\subseteq e_i$ for $i=1,2,\ldots,n$ (indices taken modulo $n$). In this  Game Maker wants to construct a BHC. Krivelevich \cite{HamKriv} asymptotically estimated the threshold bias in the graph case (note that for graphs, a BHC is just a Hamilton cycle). We adapt his proof to hypergraphs. 
\begin{theorem}\label{thBHC}
Let $b=\frac{\b N}{\log n}$. Then for any fixed $\e > 0$, Maker wins the BHC Game if $\b\leq 1-\e$ and Breaker wins if $\b\geq 1+\e$.
\end{theorem}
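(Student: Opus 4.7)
The plan is to treat the two inequalities separately. The direction $\beta\geq 1+\varepsilon$ is immediate: since a BHC requires every vertex to have hypergraph-degree at least $1$, Breaker plays the strategy of Theorem \ref{thdegree} for the Minimum Degree Game with $m=1$; the resulting isolated vertex in Maker's hypergraph forbids any BHC. Since the min-degree game has threshold $(1+o(1))N/\log n$ for any $m=o(\log n)$, this gives the Breaker direction on the nose.

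For $\beta\leq 1-\varepsilon$ I would adapt Krivelevich's argument from \cite{HamKriv} to the hypergraph setting. The target is a Maker hypergraph $G$ whose shadow graph $\Gamma(G)$, defined on $V(G)$ with edges $\{u,v\}$ for which some hyperedge of $G$ contains both, satisfies the standard P\'osa-expansion hypotheses: every $S\subseteq V$ with $|S|\le n/4$ has $|N_{\Gamma(G)}(S)\setminus S|\ge 2|S|$, every vertex set of size $\ge n/4$ spans an edge of $\Gamma(G)$, and $\delta(\Gamma(G))\geq d_0$ for a suitable constant $d_0$. Krivelevich's weighted-danger strategy carries over with ``claim a graph edge'' replaced by ``claim a $k$-hyperedge'' and the $n-1$ graph edges at a vertex replaced by $N=\binom{n}{k-1}$ hyperedges. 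Each vertex $v$ contributes a degree-danger $\varphi_v$ and each subset $S$ (with $|S|\leq n/4$) contributes a set-danger $\varphi_S$ measuring how close Breaker has come to cutting $S$ off in $\Gamma(G)$; Maker always plays a hyperedge that maximally reduces the largest danger, with the bias budget $b\leq(1-\varepsilon)N/\log n$ matching exactly the potential bound needed to keep all dangers below threshold. By the P\'osa rotation-extension lemma, $\Gamma(G)$ is then Hamiltonian.

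The extra step, not present in the graph case of \cite{HamKriv}, is to lift the Hamilton cycle $v_1\cdots v_nv_1$ of $\Gamma(G)$ to a BHC by choosing \emph{distinct} hyperedges $e_1,\dots,e_n\in E(G)$ with $\{v_i,v_{i+1}\}\subseteq e_i$. I would do this by Hall's theorem on the bipartite graph whose one side is $\{\{v_i,v_{i+1}\}:1\le i\le n\}$ and whose other side is $E(G)$, with $\{v_i,v_{i+1}\}\sim e$ iff $\{v_i,v_{i+1}\}\subseteq e$. The matching exists provided that Maker's strategy also yields a \emph{hyperedge-expansion} property: for every set $T$ of pairs covered by $\Gamma(G)$, the number of hyperedges of $G$ containing at least one pair of $T$ is at least $|T|$. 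This can be built into the strategy by strengthening $\varphi_S$ to a hyperedge-based analogue that counts hyperedges touching $S$ rather than shadow-graph edges. The step I expect to be the main obstacle is precisely this strengthening: a single hyperedge can simultaneously witness up to $k-1$ consecutive edges of the shadow cycle, so one has to rule out pathological concentrations in which many cycle edges share only a few hyperedges, and one has to verify that the new potential still fits inside the bias budget $(1-\varepsilon)N/\log n$ without perturbing Krivelevich's analysis. Everything else amounts to translating Krivelevich's constants from $b\approx n/\log n$ to $b\approx N/\log n$.
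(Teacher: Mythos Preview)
Your Breaker direction (isolate a vertex via the Minimum Degree Game) matches the paper exactly. For the Maker direction, however, the paper avoids the shadow graph altogether and works directly with Berge paths. It calls a $k$-graph $\cH$ an $(r,2)$-expander if for all disjoint $X,Y$ with $|X|\le r$ and $|Y|<2|X|$ some hyperedge meets $X$ in exactly one vertex and misses $Y$, and then invokes Lemma~2.1 of \cite{BBDS}: a connected $(r,2)$-expander $k$-graph is Berge-Hamiltonian or has $\Omega(r^2n^{k-2})$ boosters, where a booster is a hyperedge whose addition lengthens the longest Berge path or closes a BHC. Maker plays the randomised Minimum Degree strategy of Theorem~\ref{thdegreegeneral} with constant $m$ and $\alpha=1-\varepsilon'$; a union bound over $X,Y$ shows the result is w.h.p.\ an $(\varepsilon n/10k,2)$-expander. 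She then connects the $O(1)$ components in $O(1)$ moves and claims boosters for at most $n$ further moves, which is possible because only $o(n^k)$ edges have been taken.

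Your shadow-graph route has a real gap exactly where you flag it. The Hall condition for lifting can fail in the simplest way: if a hyperedge $e=\{v_i,\dots,v_{i+k-1}\}$ is the \emph{unique} Maker hyperedge containing each of the $k-1$ consecutive shadow-cycle pairs $\{v_j,v_{j+1}\}$, $i\le j\le i+k-2$, then those $k-1$ pairs have only $e$ as a neighbour in your bipartite graph and no system of distinct representatives exists for $k\ge 3$. P\'osa expansion of $\Gamma(G)$ does not rule this out, and with only $O(n)$ Maker hyperedges the average shadow edge lies in $O(1)$ hyperedges, so such deficiencies are the generic situation rather than a pathology. Your proposed fix --- strengthening $\varphi_S$ to count hyperedges --- cannot encode this Hall condition, because the pairs that need distinct representatives are the edges of a Hamilton cycle determined only \emph{after} play ends; one cannot write a danger function targeting an as-yet-unknown set of pairs. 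The BBDS booster lemma is precisely what replaces your lifting step: it performs rotation--extension directly on Berge paths, so distinctness of the witnessing hyperedges is built in from the start.
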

Our next theorem concerns the Hamilton $\ell$-cycle Game in $H_{n,k}$. A {\em Hamilton $\ell$-cycle} is, for some ordering $v_1, \ldots, v_n$ of the vertices, a sequence of edges $\set{v_1, \ldots, v_k}, \set{v_{k-\ell+1}, \ldots, v_{2k-\ell}} \ldots,$ subscripts taken modulo $n$, so that each edge in the sequence intersects the previous one in $\ell$ vertices. A {\em perfect matching} is a set of edges $E_1,E_2,\ldots,E_{n/k}$ whose union is $[n]$ and for which $E_i\cap E_{j}=\es$ for all $i\neq j$. For convenience, since we will treat them together, we view a perfect matching as a Hamilton $0$-cycle. 
\begin{theorem}\label{MBPM}
Fix $\ell, k$ with $\ell < k/2$. Let $b_0=\frac{n}{2k\log n}$. Then, Maker wins the Hamilton $\ell$-cycle Game if $b\leq b_0$.
\end{theorem}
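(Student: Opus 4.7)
We apply the Bednarska--{\L}uczak random-strategy paradigm, in its $k$-uniform hypergraph form. Maker's strategy is the random one: at each of Maker's turns, she claims a uniformly random edge of $H_{n,k}$ that has not yet been claimed by either player. We will show that the edge set $M$ produced by this strategy contains a Hamilton $\ell$-cycle \whp, so by averaging, Maker has a deterministic winning strategy.

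With $b \leq b_0 = n/(2k\log n)$, Maker plays at least $r := \lfloor \binom{n}{k}/(b+1) \rfloor$ turns, so $|M| = r$. A standard coupling (the heart of the Bednarska--{\L}uczak lemma, now applied to hypergraphs) shows that regardless of Breaker's play, $M$ stochastically dominates a binomial random $k$-uniform hypergraph $G^{(k)}(n, p)$ with $p = (1-o(1))/(b+1) \geq (1-o(1)) \cdot 2k\log n/n$, with respect to monotone increasing properties. The key point is that at each of Maker's turns every unclaimed edge is equally likely to be chosen; hence marginally each edge ends up in $M$ with probability at least $(1-o(1)) r / \binom{n}{k} = p$.

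We then invoke the threshold for Hamilton $\ell$-cycles in $G^{(k)}(n, q)$ for $\ell < k/2$: by results of Dudek and Frieze, $G^{(k)}(n, q)$ \whp\ contains a Hamilton $\ell$-cycle whenever $q \geq C \log n / n^{k-1}$ for a suitable constant $C = C(k, \ell)$. Maker's effective density $p = \Theta(\log n/n)$ exceeds this threshold by a factor of $\Omega(n^{k-2})$, which is at least a constant when $k = 2$ and grows polynomially in $n$ for $k \geq 3$. So $M$ contains a Hamilton $\ell$-cycle \whp, and Maker therefore has a deterministic winning strategy.

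The main technical obstacle is the Bednarska--{\L}uczak coupling step, which must be verified rigorously against an adaptive Breaker; only minor adaptations to the usual graph-case argument are needed in the hypergraph setting. A secondary but well-established ingredient is the Hamilton $\ell$-cycle threshold in random $k$-uniform hypergraphs for $\ell < k/2$, which in this regime coincides up to constants with the isolated-vertex threshold and hence with $\log n/n^{k-1}$.
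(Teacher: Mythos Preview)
The central coupling claim---that Maker's random edge set $M$ stochastically dominates $G^{(k)}(n,p)$ for monotone properties---is false, and for $k=2$ (with $\ell=0$, the perfect-matching game, which the theorem covers) the pure random strategy actually \emph{loses}. Against a Maker who picks a uniformly random unclaimed edge each turn, Breaker spends all $b=n/(4\log n)$ of his moves per round on edges through a fixed vertex $v$; he exhausts the $n-1$ such edges in about $4\log n$ rounds, during each of which Maker's random edge contains $v$ with probability at most $(n-1)/\bigl(\binom{n}{2}-O(n)\bigr)\sim 2/n$, so Maker ever touches $v$ with probability $O(\log n/n)=o(1)$ and $v$ is isolated w.h.p. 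For the monotone increasing event ``$\deg(v)\ge 1$'' we thus have $\Pr(M\text{ has it})=o(1)$ while $\Pr(G(n,p)\text{ has it})=1-o(1)$, so no such domination can hold. More generally, the Bednarska--{\L}uczak machinery does not yield stochastic domination: in its ``sample with replacement'' form it only shows that $M$ contains a truly random $r$-multiset minus an \emph{adaptively chosen} set of collisions, which suffices for local containment problems (where there are many disjoint witnesses) but not for spanning structures without a separate robustness argument that you have not supplied. Even for $k\ge 3$, where the crude isolation attack no longer succeeds at this bias, your stated justification (``marginally each edge ends up in $M$ with probability $\ge p$'') is both false---Breaker can deterministically grab any single fixed edge on his first move---and in any case would not imply domination.

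The paper's argument is structurally different. It fixes in advance a $(k-1)$-uniform Hamilton $\ell$-cycle $A_1,\dots,A_D$ on $n-D$ of the vertices (with $D=n/(k-\ell)$), and restricts Maker to edges of the form $A_i\cup\{x\}$ with $x$ in the remaining $D$-set $B$; a Hamilton $\ell$-cycle in $H_{n,k}$ then corresponds exactly to a perfect matching in $K_{D,D}$ on parts $\{A_1,\dots,A_D\}$ and $B$. Maker runs the minimum-degree strategy of Theorem~\ref{thdegreegeneral1} on this bipartite board, making the allowed ``arbitrary choices'' uniformly at random, and a Hall-condition union bound then shows she obtains a perfect matching w.h.p. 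The deterministic minimum-degree component of that strategy is precisely what neutralises Breaker's vertex-isolation attack---the ingredient your purely random strategy lacks.
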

The lower bound on the threshold bias is rather weak. For an upper bound we can take $b\sim N/\log n$ as in Theorems \ref{thdegree}, \ref{thmatrix} and \ref{thBHC}.

A small tweak to the proof of Theorem \ref{MBPM} yields a more colorful version. In the Rainbow  Hamilton $\ell$-cycle Game Maker gives her acquired edge a color from a set $Q$ of size $D=n/(k-\ell)$. Her aim is to build a rainbow Hamilton $\ell$-cycle i.e. one in which edge has a different color.
\begin{theorem}\label{RMBPM}
Fix $\ell, k$ with $\ell < k/2$ and let $Q$ be a $D$-set of colors. Then, Maker wins the Rainbow Hamilton $\ell$-cycle Game if $b\leq b_0$.
\end{theorem}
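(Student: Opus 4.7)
The plan is to view the Rainbow Hamilton $\ell$-cycle Game as played on the enlarged board $E(H_{n,k})\times Q$ of edge-color pairs: Maker's single move picks a pair $(e,q)$, simultaneously claiming the edge and committing to a color, while Breaker's $b$ moves eliminate $b$ pairs. Her target is $D$ pairs whose edges form a Hamilton $\ell$-cycle and whose colors are pairwise distinct. I will execute the $D$-round cycle-extension strategy of Theorem~\ref{MBPM} on the underlying edges, refined so that on the $i$-th round Maker also avoids reusing colors: she picks a pair $(e,q)$ with $e$ a legal extension, $q$ outside her $i-1$ previously used colors, and $(e,q)$ still unclaimed by Breaker.

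The feasibility of this refinement follows from a counting estimate. First, the lower bound $|P_i|=\Omega(n^{k-\ell})$ on Maker's pool of legal extensions, supplied by the proof of Theorem~\ref{MBPM}, transfers verbatim to the pair game, since in the pair game Breaker can completely eliminate an edge $e$ from $P_i$ only by claiming all $D$ pairs on $e$, and his cumulative pair-budget over the $D$ Maker-rounds is only $Db_0=\tfrac{n}{k-\ell}\cdot\tfrac{n}{2k\log n}=O(n^2/\log n)$. Second, the number of edges $e\in P_i$ all of whose not-yet-used-by-Maker colors are already claimed by Breaker is at most (total Breaker claims)$/(D-i+1)$, so Maker has a legal refined move whenever
\[
|P_i|\cdot(D-i+1)\ >\ Db_0,\qquad\text{i.e.,}\qquad \Omega(n^{k-\ell})\cdot(D-i+1)\ >\ O\!\left(n^2/\log n\right).
\]
Since the hypothesis $\ell<k/2$ forces $k-\ell\geq 2$ and hence $n^{k-\ell}\geq n^2$, this inequality is valid with room to spare for every $i\leq D$, including the worst case $i=D$ when only one admissible color remains. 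After $D$ rounds Maker's chosen pairs therefore form a rainbow Hamilton $\ell$-cycle.

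The main point to verify carefully is precisely that worst case $i=D$: then the color-exclusion effectively demands that Maker find an edge whose \emph{specific} remaining color has not been claimed by Breaker, costing a factor $D$ from the naive pair count $|P_i|\cdot D$. The factor-$\log n$ gap between $n^{k-\ell}\geq n^2$ and $Db_0=O(n^2/\log n)$, coming entirely from the hypothesis $\ell<k/2$, is exactly what absorbs the rainbow constraint and lets the same bias $b\leq b_0$ as in Theorem~\ref{MBPM} suffice for the colorful version.
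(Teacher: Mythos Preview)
Your argument rests on two premises that are not supported. First, the proof of Theorem~\ref{MBPM} is \emph{not} a ``$D$-round cycle-extension strategy'' with pools $|P_i|=\Omega(n^{k-\ell})$. In that proof Maker fixes in advance a $(k-1)$-uniform Hamilton $\ell$-cycle $A_1,\ldots,A_D$ on $n-D$ of the vertices, restricts herself to edges of the form $A_j\cup\{x\}$ with $x$ in the remaining $D$-set $B$, plays the Minimum Degree Game (with $m=10$) on the complete bipartite graph $\Gamma$ with sides $\{A_1,\ldots,A_D\}$ and $B$, and finally invokes Hall's theorem to extract a perfect matching of $\Gamma$, which is the desired Hamilton $\ell$-cycle. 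No per-round pool bound is ever established, and indeed a greedy extension strategy cannot have $|P_i|=\Omega(n^{k-\ell})$ throughout: when $i$ is within a constant of $D$ only $O(1)$ uncovered vertices remain, so the pool of legal extensions collapses to a constant (at $i=D$ the closing edge is unique), and Breaker can block it long in advance. Your key inequality $|P_i|\cdot(D-i+1)>Db_0$ therefore fails at the end of the process.

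Second, in the Rainbow game as defined here Breaker claims \emph{edges}, not edge--color pairs; a Breaker edge $e$ kills all $D$ pairs on $e$ simultaneously, so your ``cumulative pair-budget $Db_0$'' undercounts Breaker's power by a factor of $D$, and winning the pair game you describe would not prove the theorem. Once the actual structure of the Theorem~\ref{MBPM} proof is in hand, the rainbow twist is far simpler than any color-counting: Maker colors each acquired edge $A_j\cup\{n-D+i\}$ with color $c_i$. Since the Hamilton $\ell$-cycle she eventually exhibits is a perfect matching of $\Gamma$, each $B$-vertex---and hence each color---appears exactly once, so the cycle is automatically rainbow with no extra work.
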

In Section \ref{sec:degree} we prove Theorem \ref{thdegree}. In Section \ref{sec:rank} we prove Theorem \ref{thmatrix}. In Section \ref{sec:BHC} we prove Theorem \ref{thBHC}. In Section \ref{sec:MBPM} we prove Theorems \ref{MBPM} and \ref{RMBPM}.

We follow a strategy as laid out in Krivelevich \cite{HamKriv} and Ferber, Krivelevich and Naves \cite{FKN}. Here Maker builds a random graph that w.h.p. contains the desired structure. Our contribution is to extend this idea to hypergraphs and prove Theorems \ref{thdegree} to \ref{RMBPM}. 
\section{Minimum Degree Game}\label{sec:degree}
\subsection{Breaker strategy}
Suppose that $b\geq \frac{(1+\e)N}{\log n}$. We will show that Breaker can take every edge incident with some particular vertex, which means that Breaker actually wins all of the games listed in Theorems \ref{thdegree} -- \ref{thBHC}. Breaker's strategy here will have two phases. 

Breaker's goal in Phase 1 is to build a set $S$ of $s:=n / \log n$ vertices with the following properties: by the end, none of the vertices in $S$ are incident with any of Maker's edges, and Breaker has taken every edge which is incident with two vertices of $S$. We follow a strategy based on that of Chv\'atal and Erd\H{o}s \cite{CE} for graphs.

To analyze Phase 1, suppose Breaker has some current set $S$ with the required properties (in the beginning $S$ is empty). We will show that Breaker can increase the size of $S$ by one each turn. Assuming that Breaker has succeeded so far in growing $S$ by one vertex at each turn, but $|S|<s$, there have only been $|S|=o(n)$ turns so far and so Maker has only touched $o(n)$ vertices. Breaker chooses two vertices $x, y$ that are untouched by Maker. Breaker takes every edge containing $x$ and $y$, and every edge containing one of $x, y$ and another vertex from $S$. Breaker can do all of this in one turn since 
$b > (1+2|S|)\binom {n}{k-2}$. Breaker then adds $x, y$ to $S$. On Maker's turn, Maker can touch at most one vertex from $S$, and so playing by this strategy Breaker can keep growing $S$ by one vertex each turn. Thus, Phase 1 succeeds. 

In Phase 2, Breaker's goal is to take every remaining edge from some vertex in $S$. Phase 2 will be an instance of the Box Game of Chv\'atal and Erd\H{o}s \cite{CE}. In the Box Game, we have $x$ boxes, each containing $y$ balls. Two players, BoxMaker and BoxBreaker, take turns where BoxMaker takes $b$ balls per turn and BoxBreaker only takes one ball. BoxMaker wins if they take every ball in some box. Theorem 2.1 in \cite{CE} implies that BoxMaker wins if $b \ge y / \log x$. For our present Minimum Degree Game, Breaker will play the role of BoxMaker. For each vertex $v\in S$ we have a ``box'' consisting of all the edges containing $v$, so each box has at most $N$ elements. Since $b \ge N/ \log s \sim N / \log n$, Breaker wins. 

\subsection{Maker strategy}
We will prove something stronger than what is strictly necessary for Therem \ref{thdegree}. 
\begin{theorem}\label{thdegreegeneral}
Let $N=\binom{n-1}{k-1}$ and breaker's bias be $b$. Suppose that $1\leq b\leq \frac{(1-\e')\a N}{km+\log n}$ for $\a, \e' \in (0, 1)$. There exists a strategy for Maker to obtain minimum degree $m$ in at most $mn$ rounds. Furthermore, Maker's strategy involves some arbitrary choices. In particular, in any round, Maker chooses a vertex $v$ (specified by the strategy) and then arbitrarily chooses an edge incident containing $v$ from a set of size at least $(1-\a)N$.
\end{theorem}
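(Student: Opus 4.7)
The plan is to use a greedy Gebauer--Szab\'o style strategy \cite{GS}: in each round, Maker selects an \emph{alive} vertex $v^*$ (one with $d_M(v^*) < m$) maximizing the current Breaker-degree $d_B(v^*)$, breaking ties arbitrarily, and then claims any free edge containing $v^*$. Each vertex can be selected at most $m$ times before its Maker-degree reaches $m$ and it drops out of the alive set, so the game terminates in at most $mn$ rounds with every vertex at Maker-degree $\ge m$. The only thing to verify is the ``freedom'' condition: whenever Maker selects $v^*$, at least $(1-\alpha)N$ edges containing $v^*$ must still be free. Equivalently, $d_B(v^*)+d_M(v^*)\le\alpha N$; since $d_M(v^*)\le m-1$, it suffices to maintain the invariant $d_B(v)\le \alpha N - m$ for every alive vertex $v$.

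To control $d_B$, I would use the standard exponential potential
\[
\Phi_t \;:=\; \sum_{v\,\text{alive at time }t}(1+\eta)^{d_B(v)},
\]
with $\eta\sim (km+\log n)/(\alpha N)$ to be tuned. Initially $\Phi_0=n$. Maker's greedy move never increases $\Phi_t$: it only ejects vertices from the sum when their Maker-degree reaches $m$. Breaker's $b$ edges increase $\Phi_t$, and the key lemma is that, using Maker's greedy choice, $\Phi_t$ admits a uniform upper bound of the form
\[
\Phi_t \;\le\; n\cdot(1+\eta)^{kbm + O(\log n/\eta)}.
\]
Taking logs and using $\log(1+\eta)\approx\eta$ gives, for every alive $v$, $d_B(v) \le kbm + O(\log n/\eta)$. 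With $\eta$ chosen as above, the hypothesis $b\le(1-\e')\alpha N/(km+\log n)$ makes both $kbm$ and $\log n/\eta$ simultaneously at most (a fixed fraction of) $(1-\e')\alpha N$, so $d_B(v)\le(1-\e'/2)\alpha N\le\alpha N-m$ for $n$ large, as required. Throughout this argument, Maker retains the claimed freedom of at least $(1-\alpha)N$ edges to choose from at the specified vertex, which will matter for the random-construction steps in later sections.

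The main obstacle is the sharp bound on $\Phi_t$. A naive analysis treats each Breaker edge as multiplying $\Phi_t$ by $(1+\eta k)$, leading to exponential blow-up over the $mn$ rounds and yielding only the much weaker threshold $b\lesssim\alpha N/(kmn)$. The Gebauer--Szab\'o sharpening is a careful amortization: an alive vertex absorbs at most $\sim kbm$ Breaker-incidences over its lifetime (since Maker drops it within $m$ picks once it becomes the max), accounting for the $km$ term in the denominator, while the additional $\log n/\eta$ slack reflects an Erd\H{o}s--Selfridge style discrepancy between the maximum and average of $(1+\eta)^{d_B(v)}$ across alive vertices. Adapting this amortization from graphs to $k$-uniform hypergraphs mostly amounts to carefully tracking factors of $k$; this is the main technical content of the section.
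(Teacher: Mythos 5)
There is a genuine gap, and it sits exactly where you flag ``the main obstacle'': the bound $\Phi_t \le n\cdot(1+\eta)^{kbm+O(\log n/\eta)}$ is asserted, not proved, and the closing paragraph of heuristics (``careful amortization'', ``Erd\H{o}s--Selfridge style discrepancy'') is a description of what such a proof would have to achieve, not an argument. Moreover, the form in which you state the bound is too weak to use: taking logarithms gives $d_B(v)\le kbm + C\log n/\eta$ for whatever constant $C$ your amortization yields, and since $\log n/\eta\approx \a N\log n/(km+\log n)$, any constant $C>1$ already ruins the conclusion in the main regime $km=o(\log n)$, where the $kbm$ term is negligible and you need the coefficient of $\a N\log n/(km+\log n)$ to be $1+o(1)$ to stay below $\a N-m$. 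Your deduction ``both terms are at most a fixed fraction of $(1-\e')\a N$, hence $d_B(v)\le (1-\e'/2)\a N$'' is also incorrect as stated: when $km=o(\log n)$ the two terms sum to $(1-o(1))\a N$, and one has to check that the remaining slack, roughly $\e' km\,\a N/(km+\log n)$, still exceeds $m$. Finally, the supporting intuition ``an alive vertex absorbs at most $\sim kbm$ Breaker-incidences over its lifetime'' is false as stated: a vertex that is not currently the maximum can absorb arbitrarily many Breaker incidences while it waits its turn; bounding that waiting cost is the entire difficulty and is where the $b\log n$-type term actually comes from.

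For comparison, the paper's proof does not use an exponential potential at all. Maker maximizes the \emph{danger} $\dg(v)=d_B(v)-kb\,d_M^+(v)$ over dangerous vertices, not the raw Breaker-degree; the subtracted term is what pays for the $kbm$ allowance (re-choosing the current maximum drops its danger by $kb$, inequality \eqref{appa2}), and with raw $d_B$-maximization you would need a substitute for that step. The quantitative heart is then a backwards averaging argument over the set $J(i)$ of vertices eased after step $i$ in a hypothetical losing line of play, which produces the harmonic sums $b\log\ell+kb\log(r/\ell)$ with the correct constant $1$ in front of the dominant term, plus hypergraph correction terms $\s(i)$ accounting for Breaker edges meeting $J(i)$ in $s\ge 2$ vertices (Lemma \ref{lem1a}); showing these corrections are lower order, via the telescoping inequality \eqref{sumsum} and the estimate that the $\binom{n}{k-2}$-type contributions are $O(N/\log n)$, is the genuine hypergraph content --- considerably more than ``carefully tracking factors of $k$''. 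If you want to keep a potential formulation you would have to reproduce this amortization inside the potential argument (in particular, handle Breaker concentrating all $b$ edges each round on vertices far from being chosen), and as written that key lemma is precisely what is missing.
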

The arbitrary choices allowed in Maker's strategy will be used to prove Theorems \ref{thmatrix}, \ref{thBHC}. In those proofs, Maker will make each arbitrary choice uniformly at random. To prove Theorem \ref{MBPM}, we will prove a version where the board is an arbitrary $D$-regular graph ($k=2$).
\begin{theorem}\label{thdegreegeneral1}
Let $G$ be an arbitrary $n$-vertex, $D$-regular graph, i.e $k=2$ and let breaker's bias be $b$. Suppose that $b\leq \frac{(1-\e')\a D}{2m+\log n}$ for $\a, \e' \in (0, 1)$. There exists a strategy for Maker to obtain minimum degree $m$ in at most $mn$ rounds. Furthermore, Maker's strategy involves some arbitrary choices. In particular, in any round, Maker chooses a vertex $v$ (specified by the strategy) and then arbitrarily chooses an edge incident with  $v$ from a set of size at least $(1-\a)D$.
\end{theorem}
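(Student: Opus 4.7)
The plan is to obtain Theorem \ref{thdegreegeneral1} as essentially the $k=2$ case of Theorem \ref{thdegreegeneral}, with the only modification being that the board is an arbitrary $D$-regular graph rather than $K_n$. I expect the proof of Theorem \ref{thdegreegeneral} to use the complete hypergraph structure of $H_{n,k}$ only through the fact that every vertex is contained in exactly $N$ edges. Since a $D$-regular graph has this property with $N$ replaced by $D$ and $k=2$, the proof should transfer under the substitution $N \to D$.

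Concretely, Maker's strategy is to maintain the set $U = \set{v : d_M(v) < m}$ of unfinished vertices and, at each round, pick a vertex $v \in U$ of maximum Gebauer--Szab\'o-style ``danger'' $\dg(v)$ (in the sense of \cite{GS}), then play any free edge at $v$. The danger function and its update depend only on the local counters $d_M(v)$, $d_B(v)$, $d_F(v)$ (whose sum is $D$ at every vertex), together with the fact that each Breaker edge increments exactly two such counters. The Gebauer--Szab\'o potential inequality then bounds the maximum danger after any number of rounds by roughly $b(2m + \log n)/D$; our hypothesis $b \leq (1-\e')\a D/(2m+\log n)$ makes this at most $(1-\e')\a$. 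This implies both that (a) at the chosen vertex $v$ at least $(1-\a)D$ edges are free, validating the ``arbitrary choice'' clause, and (b) Breaker never isolates any $v \in U$ before Maker reaches degree $m$ there. Since each Maker move increments $d_M(v)$ by $1$ for some $v \in U$, the game terminates within $mn$ rounds.

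The main thing to verify---and the only real obstacle---is confirming that the Theorem \ref{thdegreegeneral} argument uses nothing more from the board than vertex-regularity: in particular, the danger increments under Breaker moves should depend only on which two vertices Breaker's edges touch, and not on any intersection or covering structure special to $H_{n,k}$. This is plausible since Gebauer--Szab\'o-style strategies are local in this sense, and if it holds then no new ideas beyond the substitution $N \to D$ are required for Theorem \ref{thdegreegeneral1}.
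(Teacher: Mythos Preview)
Your proposal is correct and coincides with the paper's own treatment: the paper proves Theorem \ref{thdegreegeneral1} in one line, stating that it ``is proved basically by replacing $N$ by $D$ in the proof of Theorem \ref{thdegreegeneral}'' (with a pointer to \cite{MBdir} for a fully written-out version), and your observation that the argument uses only vertex-regularity is exactly the point. One small slip: the Gebauer--Szab\'o inequality bounds $d_B(v)$ for dangerous $v$ by roughly $b(2m+\log n)$ (not $b(2m+\log n)/D$), and it is the hypothesis $b(2m+\log n)\le(1-\e')\a D$ that then yields $d_B(v)\le\a D$ and hence at least $(1-\a)D$ free edges at $v$.
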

The fact that Maker wins in Theorem \ref{thdegree} when $m=o(\log n),\b \le 1-\e$  follows from Theorem \ref{thdegreegeneral}, with $\a=(1-\e'/2), b=(1-\e')N/\log n$, where $\e'$ is chosen sufficiently small depending on $\e$, and $m=O(1)$.

\begin{proof}[Proof of Theorem \ref{thdegreegeneral}]
We adapt the argument of Gebauer and Szabo \cite{GS} from the graph case.
 We let $H_M,H_B$ denote the subgraphs of $H$ with the edges taken by Maker, Breaker respectively. Let $d_B(v), d_M(v)$ denote the degree of $v$ in $H_B, H_M$ respectively. At each turn, Maker will choose some vertex $v$ and take an edge $e$ incident with $v$, which we will call {\em easing} $v$. Of course this edge $e$ also contains other vertices, but we only say that one of them is eased per turn. Let $d_M^+(v)$ denote the number of times a vertex $v$ has been eased so far (so $d_M(v) \ge d_M^+(v)$). Let $\dg(v):=d_B(v)-kbd^+_M(v)$ be the {\em danger} of vertex $v$ at any time. A vertex is {\em dangerous} for Maker if $d^+_M(v)<m$. 

{\bf Maker's Strategy:} In round $i$, choose a dangerous vertex $v_i$ of maximum danger and choose an edge randomly from those edges incident with $v_i$ that are not already taken. 

\begin{claim}\label{clm:smalldb}
 Maker can ensure that $d_B(v)\le \a N$ for all dangerous $v\in V$.
\end{claim}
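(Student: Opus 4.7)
The plan is to adapt the Gebauer--Szab\'o \cite{GS} potential argument from graphs to $k$-uniform hypergraphs. Define the potential
\[
\Phi(t) = \sum_{v\text{ dangerous at time }t} 2^{\dg(v)/b},
\]
with $\Phi(0) = n$. The aim is to show that under Maker's max-danger strategy, $\Phi(t)$ stays at most $n^{1+o(1)}\cdot e^{O(m)}$ throughout the (at most) $mn$ rounds. Since every dangerous $v$ satisfies $2^{\dg(v)/b}\le \Phi$, this yields $\dg(v) = O(b\log n+bm)$ and hence
\[
d_B(v) = \dg(v)+kb\, d^+_M(v) \le O(b\log n)+O(kbm),
\]
which by the hypothesis $b \le (1-\e')\alpha N/(km+\log n)$ is at most $\alpha N$, with the slack $\e'$ absorbing the constants coming from the potential analysis.

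The main step is the per-round analysis. Easing the max-danger vertex $v^*$ multiplies $\phi(v^*)$ by $2^{-k}$, decreasing $\Phi$ by at least $(1-2^{-k})\phi(v^*)$; if this raises $d^+_M(v^*)$ to $m$, then $v^*$ drops out of the sum entirely, yielding an extra $2^{-k}\phi(v^*)$ of decrease (an event that can occur at most $n$ times overall). On Breaker's turn, if $t_w$ denotes the number of his edges containing $w$, then $\phi(w)$ is multiplied by $2^{t_w/b}$, and the total increase $\sum_w \phi(w)(2^{t_w/b}-1)$ must be bounded subject to $\sum_w t_w = kb$ and $t_w \le b$. While convexity of $2^x$ suggests Breaker concentrates touches, the hypergraph-specific constraint that distinct $k$-edges sharing a fixed $j$-set number only $\binom{n-j}{k-j}$ prevents him from having $t_w = b$ at more than one vertex in our parameter range, keeping his increase within a constant multiple of $\phi(v^*)$.

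The main obstacle will be executing this balancing cleanly enough to show $\Phi$ grows multiplicatively by at most $1+O(1/n)$ per round, so that aggregating over $mn$ rounds gives the invariant $\Phi \le n\cdot e^{O(m)}$. The slack factor $(1-\e')$ in the bound on $b$ is precisely what allows the constant-factor losses in the potential bookkeeping to be absorbed; without it the inequality $d_B(v)\le \alpha N$ would fail at the boundary. Once the invariant is established, substituting into $d_B(v) = \dg(v)+kb\, d^+_M(v)$ and using dangerousness ($d^+_M(v)<m$) finishes the proof of the claim.
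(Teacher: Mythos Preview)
Your plan diverges from the paper's proof, which does \emph{not} use an exponential potential: following Gebauer--Szab\'o, it assumes a bad vertex $v_g$ exists, sets $J(i)=\{v_{i+1},\dots,v_g\}$, and tracks the \emph{average} danger $\bdg$ over $J(i)$ backwards through the game, obtaining a telescoping chain of inequalities that contradicts $\bdg(M_1)=0$. Your exponential-potential route has a genuine gap at the per-round balance you flag as ``the main obstacle''. Easing $v^*$ drops $\Phi$ by $(1-2^{-k})\phi(v^*)$. On Breaker's side, with $t_w$ the number of his edges through $w$, the rise is $\sum_w \phi(w)(2^{t_w/b}-1)\le \phi(v^*)\sum_w(2^{t_w/b}-1)$, and since $2^x-1\le x$ on $[0,1]$ and $\sum_w t_w=kb$, this is at most $k\,\phi(v^*)$. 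For every $k\ge 2$ this \emph{exceeds} Maker's drop. The edge-overlap constraint you invoke does prevent many $t_w$ from equalling $b$ simultaneously, but it does not help: even in the fully spread regime $t_w\approx kb/n$ the sum $\sum_w(2^{t_w/b}-1)$ is still $\sim k$. So the net change per round is $+\Theta(\phi(v^*))$, and since $\phi(v^*)$ can be as large as $\Phi$ (not $\Phi/n$), there is no way to reach the claimed multiplicative growth $1+O(1/n)$; replacing $2$ by a base $1+\delta$ only shrinks the constant, not its sign.

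There is also a quantitative mismatch even if one grants the invariant. From $\Phi\le n\,e^{O(m)}$ with base $2$ you get $\dg(v)\le b\log_2 n + O(bm)$, hence $d_B(v)\le b\log_2 n + O(kbm)$. Substituting $b=(1-\e')\a N/(km+\log n)$ gives a leading term $(1-\e')\a N\cdot (\log_2 n)/(\log n)\approx 1.44(1-\e')\a N>\a N$; the $\e'$ slack cannot absorb the factor $1/\ln 2$. The paper's linear averaging avoids both issues: Breaker's $b$ edges raise $\sum_{v\in J(i)}\dg(v)$ by at most $b$ plus a correction $\s(i)$ for edges meeting $J(i)$ in $\ge 2$ vertices (which telescopes to a negligible total), so after dividing by $|J(i)|$ and summing harmonically one gets exactly $b\log n$ with the natural logarithm, matching the hypothesis on $b$.
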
 
\begin{proof}[Proof of Claim \ref{clm:smalldb}]
 Let $M_i,B_i$ denote Maker and Breaker's $i$th moves. Suppose for contradiction that after $B_{g-1}$ there is a dangerous vertex $v_g$ such that $d_B(v_g)>\a N$. Let $J(i):=\set{v_{i+1},\ldots,v_g}$. Next define 
\[
\bdg(M_i)=\frac{\sum_{v\in J(i-1)}\dg(v)}{|J(i-1)|}\quad\text{ and }\quad\bdg(B_i)=\frac{\sum_{v\in J(i)}\dg(v)}{|J(i)|},
\]
computed before the $i$th moves of Maker, Breaker respectively. 

Then $\bdg(M_1)=0$ and $\bdg(M_g)=\dg(v_g)> \a N-kbm$. Let $a_s(i)$ be the number of edges $e$ claimed by Breaker in his first $i$ moves such that  $|J(i)\cap e|=s$. We have
\begin{lemma}\label{lem1a}
\begin{align}
\bdg(M_i)&\geq \bdg(B_i).\label{appa1}\\
\bdg(M_i)&\geq \bdg(B_i)+\frac{kb}{|J(i)|},\text{ if }J(i)=J(i-1).\label{appa2}\\
\bdg(B_i)&\geq \bdg(M_{i+1})-\frac{kb}{|J(i)|}\label{appa3}\\
\bdg(B_i)&\geq \bdg(M_{i+1})-\frac{b+\s(i)}{|J(i)|}.\label{appa4}
\end{align}
where 
\beq{eqn:sigmadef}
{
\s(i):=\sum_{s=2}^k(s-1)\brac{a_s(i)-a_s(i-1)+\binom{|J(i)|}{s-1}\binom{n}{k-s}}.
}
\end{lemma}
\begin{proof}[Proof of Lemma \ref{lem1a}]
Equation \eqref{appa1} follows from the fact that a move by Maker does not increase danger. Equation \eqref{appa2} follows from the fact that if $v_i\in J_{i-1}$ then its danger, which is a maximum, drops by $kb$. Equation \eqref{appa3} follows from the fact that Breaker takes at most $b$ edges contained in $J_i$. For equation \ref{appa4} note that  $\bdg(M_{i+1})-\bdg(B_i)$ is $|J(i)|^{-1}$ times the increase in $\sum_{v\in J(i)}\dg(v)$ due to Breaker's additional choices. This will equal $|J(i)|^{-1}$ times the increase in $d_B(J(i))$. If Breaker chooses an edge $e$ such that $|e\cap J(i)|=s$ then this adds $s$ to $d_B(J(i))$. Let $b_s(i)$ be the number edges of this form, so that the change in $d_B(J(i))$ is $\sum_{s=1}^ksb_s(i)$. Then 
\beq{asi}{
a_s(i)-a_s(i-1)\leq b_s(i)-A_s,
}
 where $A_s=\binom{|J(i)|}{s-1}\binom{n}{k-s}$. The term $A_s$ arises as a bound on the number of edges lost due to the deletion of the vertex $v_{i-1}$. So,
 \[
 \sum_{s=1}^ksb_s(i)=\sum_{s=1}^kb_s(i)+\sum_{s=2}^k(s-1)b_s(i)\leq b+\s_i,\text{ from \eqref{asi}.}
 \]
 This completes the proof of Lemma \ref{lem1a}.
\end{proof}
It follows that 
\begin{align}
\bdg(M_i)&\geq \bdg(M_{i+1})\text{ if }J(i)=J(i-1).\label{appa5}\\
\bdg(M_i)&\geq \bdg(M_{i+1})-\min\set{\frac{kb}{|J(i)|},\frac{b+\s(i)}{|J(i)|}}.\label{appa6}
\end{align}
Next let $1\leq i_1\leq \cdots\leq i_r\leq g-1$ be the indices where $J(i)\neq J(i-1)$. Then we have $|J(i_k)|=r-k+1$ for each $k$, and $|J(i_1-1)|=|J(0)|=r+1$. Let $\ell=\rdup{\frac{n}{\log n}}$ and assume first that $r\geq \ell$ and then use the first minimand in \eqref{appa6} for $i_1,\ldots,i_{r-\ell}$ and the second minimand otherwise.
\begin{align}
    0=\bdg(M_1)&\geq \bdg(M_g)-\frac{b+\s(i_r)}{|J(i_r)|}-\cdots- \frac{b+\s(i_{r-\ell+1})}{|J(i_{r-\ell+1})|} 
-\frac{kb}{|J(i_{r-\ell})|}-\cdots-\frac{kb}{|J(i_1)|}\nonumber\\
&> \a N-kbm-\frac{b+\s(i_r)}{1}-\cdots- \frac{b+\s(i_{r-\ell+1})}{\ell} 
-\frac{kb}{\ell+1}-\cdots-\frac{kb}{r}\nonumber\\
&= \a N-kbm - b \log \ell  - kb \log \frac r\ell - \frac{\s(i_r)}{1}-\cdots- \frac{\s(i_{r-\ell+1})}{\ell} +O(b). \label{eqn:dangerbound}
\end{align}
Now using \eqref{eqn:sigmadef} we have that 
\begin{align}
 \sum_{j=r-\ell+1}^r \frac{\s(i_{j})}{r-j+1} &= \sum_{j=r-\ell+1}^r \frac{\sum_{s=2}^k(s-1)\brac{a_s(i_{j})-a_s(i_{j}-1)+\binom{r-j+1}{s-1}\binom{n}{k-s}}}{r-j+1}  \nonumber \\
 & = \sum_{s=2}^k (s-1) \sum_{j=r-\ell+1}^r  \frac{a_s(i_{j})-a_s(i_{j}-1)}{r-j+1} + \sum_{j=r-\ell+1}^r \sum_{s=2}^k \frac{\binom{r-j+1}{s-1}\binom{n}{k-s}}{r-j+1}\label{eqn:sigmabound}
\end{align}

Regarding the first sum in \eqref{eqn:sigmabound}, we claim that 
\beq{sumsum}{
\sum_{j=r-\ell+1}^r\frac{a_s(i_j)-a_s(i_j-1)}{r-j+1}=\sum_{j=1}^\ell\frac{a_s(i_{r+1-j})-a_s(i_{r+1-j}-1)}{j}\leq 0.
}
Indeed, observe first that $a_s(i_{r+1-j}-1)\geq a_s(i_{r-j}),j\geq0$ follows from $J(i_{r-j})=J(i_{r+1-j}-1)$. Also, $a_s(i_r)=0$ because $J(i_r)=J(g-1)=\set{v_g}$. And then \eqref{sumsum} becomes 
\[
a_s(i_r)+\sum_{j=1}^{\ell-1}\brac{\frac{a_s(i_{r-j})}{j+1}-\frac{a_s(i_{r+1-j}-1)}{j}}-\frac{a_s(i_{r+1-\ell}-1)}{\ell}\leq 0.
\]

Regarding the second sum in \eqref{eqn:sigmabound}, note that for $s\ge 3$ and $r-\ell+1 \le j \le r$ we have $\frac{\binom{r-j+1}{s-1}\binom{n}{k-s}}{r-j+1}=O\bfrac{1}{\log n}\frac{\binom{r-j+1}{s-2}\binom{n}{k-s+1}}{r-j+1}$ and so the second sum is at most $\brac{1 + O\brac{\frac {1}{\log n}}} \ell \binom{n}{k-2}$.

Continuing from \eqref{eqn:dangerbound} we have
\begin{align*}
0 &> \a N-kbm - b \log \ell  - kb \log \frac r\ell - \brac{1 + O\brac{\frac {1}{\log n}}} \ell \binom{n}{k-2} + O(b)\\
&\geq \a N-kbm-b\log n+b(k+1)\log\log n-O\bfrac{N}{\log n}+O(b)>0
\end{align*}
since $b\leq \frac{(1-\e')\a N}{km+\log n}$. This completes the case where $r \ge \ell$.

If $r<\ell$ then we use the first minimand in \eqref{appa6} and get a similar contradiction from
\[
0\geq \a N-kbm-b\log \ell.
\]
This completes the proof of Claim \ref{clm:smalldb}.
\end{proof}
This completes the proof of Theorem \ref{thdegreegeneral}.
\end{proof}

Note that in the proof above, we actually showed that every vertex is eased $m$ times, which is stronger than Maker having minimum degree $m$. Thus we have the following.
\begin{corollary}\label{cor:eased}
    Under the assumptions of Theorem \ref{thdegreegeneral}, Maker can ensure that each vertex is eased $m$ times. 
\end{corollary}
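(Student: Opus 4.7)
The plan is to observe that Corollary \ref{cor:eased} is essentially already proved by the argument establishing Theorem \ref{thdegreegeneral}, and to simply collect the information needed to read it off. Maker's strategy as defined in that proof is: at each round, pick a dangerous vertex $v$ (i.e.~one with $d_M^+(v)<m$) of maximum danger and ease it by taking an edge containing $v$. The quantity $d_M^+(v)$ is the number of times $v$ has been eased, so a vertex ceases to be dangerous exactly when it has been eased $m$ times. Thus, showing that the strategy can always be executed until no dangerous vertex remains is the same as showing every vertex is eased $m$ times.

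The single thing I would check carefully is that the strategy is always legal, i.e.~that whenever a dangerous vertex $v$ is chosen there exists an untaken edge containing $v$ to play. This is immediate from Claim \ref{clm:smalldb}: for a dangerous $v$, Breaker has at most $d_B(v)\le \a N$ edges through $v$, and Maker has at most $m-1$, so the number of edges through $v$ available to Maker is at least $N - \a N - (m-1) \geq (1-\a)N - m$, which is positive under the hypotheses on $b, m, \a$. Hence Maker can always complete the round. Since each round strictly increases $\sum_v d_M^+(v)$ by $1$ and this sum is bounded above by $mn$, the process terminates in at most $mn$ rounds, at which point every vertex has $d_M^+(v)\ge m$, yielding the corollary.

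I do not expect any obstacle; the work is already contained in Theorem \ref{thdegreegeneral}. The only subtlety worth stating in the write-up is the distinction between $d_M(v)$ (edges of Maker through $v$) and $d_M^+(v)$ (times $v$ was specifically designated as the eased vertex): Theorem \ref{thdegreegeneral} a priori only guarantees $d_M(v)\ge d_M^+(v)\ge$ (something), but since the strategy halts precisely when every vertex has $d_M^+(v)\ge m$, the corollary's stronger conclusion on $d_M^+$ is in fact what the proof literally establishes.
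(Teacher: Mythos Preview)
Your proposal is correct and takes essentially the same approach as the paper, which simply observes that the proof of Theorem~\ref{thdegreegeneral} already establishes that every vertex is eased $m$ times (since the strategy halts only when no dangerous vertex remains). One small slip: when you say Maker has at most $m-1$ edges through a dangerous $v$ you are conflating $d_M^+(v)$ with $d_M(v)$; the cleanest fix is to invoke the size-$(1-\a)N$ guarantee already stated in Theorem~\ref{thdegreegeneral} rather than rebounding it.
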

	 The purpose of this corollary is for use in the Matrix Rank Game and in the BHC Game. In those games, the first phase of Maker's strategy will be to build a hypergraph that roughly resembles ``$m$-out'' which is done by easing each vertex $m=O(1)$ times. 
	 
Theorem \ref{thdegreegeneral1} is proved basically by replacing $N$ by $D$ in the proof of Theorem \ref{thdegreegeneral}. One can also see a complete proof in the appendix of \cite{MBdir}. (This proof replaces $m$ by $K$, otherwise the proof is basically the same as the proof of Theorem  \ref{thdegreegeneral}.
\section{Rank Game}\label{sec:rank}
Observe first that if $b\geq (1+\e)N / \log n$ then Breaker can isolate a vertex and win the game. So from now on asume that $b\leq (1-\e)N / \log n$.

 {\bf Maker's Strategy:} There will be two phases, the first of which will be to play the Minimum Degree Game. We apply Theorem \ref{thdegreegeneral} with $\e'$ small with respect to $\e$, $\a=1-\e'$ and $m=O(1)$ where $m$ is large. The arbitrary choices in the strategy will be made uniformly at random. In so doing Maker builds a random matrix $A$ with the following properties: for each $i\in[n]$ there are $m$ columns $Col_i=\set{\bc_{i,j},j=1,2,\ldots,m}$ where (i) each $\bc\in Col_i$ has a 1 in row $i$ and (ii) the other $k-1$ 1's are chosen randomly from a subset of $\binom{[n]\sm \set{i}}{k-1}$ of size at least $\a N$.
 
 A {\em dependency} will be a set of rows $S$ such that $\sum_{\br\in S}\br=0$. We have to show that w.h.p. there are no dependencies, except for $S=[n]$ in the case where $k$ is even. Let $\cD_S$ for $S\subseteq [n]$ be the event that $S$ defines a dependency. We now show that at the end of Phase 1, there are no dependencies $S$ of size $s \le \eta n$ where $\eta=\frac{1-\a}{3k}$. We have
\begin{align}
 \Pr(\cD_S)\leq \prod_{i\in S}\brac{\frac{s\binom{n-1}{k-2}}{(1-\a)\binom{n-1}{k-1}}}^m \le \brac{\frac{ks}{(1-\a)n}}^{ms}.\label{DS}
 \end{align}
 {\bf Explanation:} the second inequality is easy, so we explain the first. If $i\in S$ then each edge chosen by vertex $i$ must have an even number of vertices in $S$ (including $i$), so there must be another vertex in $S$. The number of possibilities for each such edge is at most $s\binom{n-1}{k-2}$. The denominator accounts for the edge being chosen from a set of size at least $(1-\a)\binom{n-1}{k-1}$. 
 
 So, from \eqref{DS},
 \begin{align*}
 \Pr(\exists \text{ a dependency }S,|S|\leq \eta n)&\leq\sum_{s=1}^{\eta n}\binom{n}{s}\bfrac{ks}{(1-\a)n}^{ms}\\
 &\leq \sum_{s=1}^{\eta n}\brac{\bfrac{s}{n}^{m-1}\cdot\frac{k^{m}e}{(1-\a)^m}}^s=o(1),
\end{align*}
 since $\eta=\frac{1-\a}{3k}$ and $m$ is sufficiently large. 

In Phase 2, Maker just chooses uniformly at random from the available set of edges. We will argue that in Phase 2, Maker can eliminate all remaining dependencies $S$. 
 
First assume that $\eta n \le |S| \le (1-\eta)n$. There are $W_s=s\binom{n-s}{k-1}=\Omega(n^k)$ edges with exactly one vertex in $S$, and we call this set of edges $E_S$. At the start of Phase 2, at most $m(b+1)N=o(n^k)$ have been acquired by Maker or Breaker. In the next $n \log \log n$ rounds, at most $(b+1) n \log \log n = o(n^k)$ additional edges will be acquired by the players. So the probability that Maker avoids $E_S$ during these rounds is at most 
\[
\brac{1-\frac{W_s - o(n^k)}{\binom{n}{k}}}^{n \log \log n} \le \exp \brac{-\Om(n \log \log n)}.
\]
Taking a union bound over all choices of $S$, we see that w.h.p.~there will be no dependencies $S$ of size $|S| \le (1-\eta)n$ at the end of Phase 2. If $k$ is even then we are done since the complement of a dependency is a dependency.

For odd $k$, and a dependency with $|S|\ge (1-\eta)n$ we focus on $T=[n]\sm S$ during Phase 1. This will be very similar to our treatment of $\cD_S$ on line \eqref{DS}. Note that $\sum_{\br\in T}={\bf 1}$, where ${\bf 1}=[1,1,\ldots,1]$. Let now $\cO_T $ be the event that $\sum_{\br\in T}={\bf 1}$. Then, where $t=|T|$,
\beq{OT}{
\Pr(\cO_T)\leq\prod_{i\notin T}\brac{\frac{t\binom{n-1}{k-2}}{(1-\a)\binom{n-1}{k-1}}}^m.
 }
 {\bf Explanation:} if $i\notin T$ then each edge/column chosen by vertex $i$ must contain a vertex in $T$.
 
 The rest of the argument follows that of the above for the events $\cD_S$. This completes the proof of Theorem \ref{thmatrix}.

\section{BHC Game}\label{sec:BHC}

\begin{definition}
A hypergraph is an \emph{$(r, \alpha)$-expander} if for all disjoint sets of vertices $X$ and $Y$, if $|Y| < \alpha |X|$ and $|X| \leq r$, then there is an edge, $e$, such that $|e \cap X| = 1$ and $e \cap Y = \emptyset$.
\end{definition}

\begin{definition}\label{def:booster}
For a hypergraph $\cH$, a \emph{booster} is an edge such that either $\cH \cup e$ has a longer (Berge) path than $\cH$ or $\cH \cup e$ is (Berge) Hamiltonian.
\end{definition}

\begin{lemma}[Lemma 2.1 in \cite{BBDS}] \label{lem:boosters}
There exists a constant $c_k > 0$ such that if $\cH$ is a connected $(r,2)$-expander $k$-graph on at least $k+1$ vertices, then $\cH$ is Hamiltonian, or it has at least $r^2 n^{k-2} c_k$ boosters.
\end{lemma}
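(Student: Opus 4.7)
The plan is to follow Pósa's rotation--extension method, adapted to Berge paths in a $k$-uniform hypergraph. Assume $\cH$ is not Berge-Hamiltonian and take a longest Berge path $P=(v_1,e_1,v_2,\ldots,e_{\ell-1},v_\ell)$ in $\cH$. If $P$ does not span $V(\cH)$, then connectivity of $\cH$ yields an edge joining $V(P)$ to an external vertex; such an edge already extends $P$ and is a booster. A short calculation shows that if even one such external edge exists, together with edges of $\cH$ through $V(P)$, one easily obtains $\ge c_k r^2 n^{k-2}$ boosters via the trivial extension alone. So the interesting case is $V(P)=V(\cH)$, which I now assume.

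The Berge analogue of a Pósa rotation is the following: if $f\in\cH$ contains $v_\ell$ and some interior vertex $v_i$ of $P$, then replacing the edge $e_i$ (which currently witnesses the pair $\{v_i,v_{i+1}\}$) by $f$ produces a new longest Berge path $(v_1,e_1,\ldots,v_i,f,v_\ell,e_{\ell-1},v_{\ell-1},\ldots,e_{i+1},v_{i+1})$ with endpoints $v_1$ and $v_{i+1}$. Let $S_R$ be the set of right endpoints reachable from $v_\ell$ by iterated rotations that keep $v_1$ fixed. The main rotation step shows $|S_R|\ge r$: suppose instead $|S_R|<r$, fix a family $\{P_x:x\in S_R\}$ of longest paths realising each endpoint, and let $Y$ be the set of at-most-two ``forbidden neighbours'' of each $x$ on $P_x$ (the two vertices that, if hit by a rotating edge through $x$, fail to produce a new endpoint). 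Standard bookkeeping gives $|Y|<2|S_R|$, so the $(r,2)$-expander hypothesis applied to $X=S_R$ produces an edge $e\in\cH$ with $|e\cap S_R|=1$ and $e\cap Y=\es$; this edge either extends $P$ (impossible) or realises a rotation to a new endpoint, contradicting the maximality of $S_R$. Fixing any $u\in S_R$ and repeating at the opposite end yields $S_L$ with $|S_L|\ge r$.

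To count boosters, observe that for each $(x,y)\in S_L\times S_R$ there is a longest Berge path $P_{x,y}$ in $\cH$ with these endpoints. Any edge $e$ of the complete $k$-uniform hypergraph on $V(\cH)$ containing both $x$ and $y$ is a booster: since $P_{x,y}$ spans $V(\cH)$, adding $e$ to $\cH$ closes a Berge cycle on all of $V(\cH)$, which is either already a Berge Hamilton cycle or can be rotated/reopened by Pósa's close-and-reopen to yield a strictly longer Berge path. The number of edges of $K_n^{(k)}$ containing any specified pair is $\binom{n-2}{k-2}=\Theta(n^{k-2})$, and each edge is counted at most $\binom{k}{2}$ times across pairs from $S_L\times S_R$, so the total number of distinct boosters is at least
\[
\frac{|S_L|\,|S_R|}{\binom{k}{2}}\binom{n-2}{k-2}\ \ge\ c_k\,r^2\,n^{k-2}
\]
for a suitable constant $c_k>0$ depending only on $k$.

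The main obstacle is the rotation step: defining the forbidden set $Y$ uniformly over the whole family of reachable longest paths, and verifying $|Y|\le 2|S_R|-1$ so that the $(r,2)$-expansion hypothesis applies. The subtlety in the Berge setting is that the edges $e_i$ replaced during successive rotations can overlap a pivot in more than two vertices, so one must argue that the obstruction is really controlled by the two path-edges incident to the pivot, not by the pivot's full hyperedge neighbourhood. Once that bookkeeping is carried out, the remainder of the argument is a direct translation of the classical graph proof.
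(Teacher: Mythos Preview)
The paper does not prove this lemma; it is quoted verbatim as Lemma~2.1 of \cite{BBDS} and used as a black box. So there is no in-paper proof to compare against. Your outline does follow the P\'osa rotation--extension scheme that \cite{BBDS} uses, so in that sense the approach is the expected one.

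That said, your sketch has a real gap in the case $V(P)\neq V(\cH)$. One external edge of $\cH$ gives you one booster, not $c_k r^2 n^{k-2}$ of them; the ``short calculation'' you allude to does not exist. The correct organisation is to run the two-sided rotation argument \emph{regardless} of whether $P$ spans: once you have $S_L,S_R$ of size $\ge r$, every $k$-edge through a pair $(x,y)\in S_L\times S_R$ closes $P_{x,y}$ to a Berge cycle on $V(P)$. If $V(P)=V(\cH)$ this cycle is already Hamiltonian; if not, connectivity supplies an edge of $\cH$ from $V(P)$ to an outside vertex, and the close-and-reopen step turns the cycle into a strictly longer path. Either way the closing edge is a booster, and the count $r^2\binom{n-2}{k-2}/\binom{k}{2}$ goes through. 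Your ``close-and-reopen'' remark actually belongs in the non-spanning case, not the spanning one.

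You also flag the rotation bookkeeping (bounding $|Y|$ and ensuring the expander edge is not already a path edge) as ``the main obstacle'' and then leave it undone. In the Berge setting this step genuinely requires care: the rotating hyperedge $f$ must be distinct from every $e_j$ already on the path, and the pivot's ``forbidden neighbours'' must be identified so that $|Y|<2|S_R|$. Until that is written out, the proposal is an outline rather than a proof; for the details you would need to consult \cite{BBDS} directly.
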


\begin{lemma}\label{lem:expander}
There exist constants $\e', m>0$ ($\e'$ will be small, and $m$ will be large) satisfying the following. Let $\cH$ be the hypergraph obtained by Maker playing according to Theorem \ref{thdegreegeneral} using the parameter $\a=1-\e'$.  Then w.h.p. $\cH$ is an $(\e n/10k, 2)$-expander.
\end{lemma}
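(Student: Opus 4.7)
The plan is to exploit Maker's $m$ random easings at each vertex (from Corollary \ref{cor:eased}) and show via a union bound that w.h.p.\ the resulting hypergraph $\cH$ has the desired $(r,2)$-expansion with $r = \e n/(10k)$. Maker plays by Theorem \ref{thdegreegeneral} with $\a=1-\e'$ and $m$ a large constant, making each arbitrary choice uniformly at random over the set of available edges through the chosen vertex. By Corollary \ref{cor:eased}, each vertex $v$ is eased exactly $m$ times, and at each easing the chosen edge is uniform over some history-dependent set of at least $(1-\e')N$ edges incident to $v$.

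Fix disjoint $X, Y \subseteq V$ with $x := |X| \le r$ and $y := |Y| < 2x$, and let $\cE_{X,Y}$ denote the event that no edge of $\cH$ has exactly one vertex in $X$ and avoids $Y$. If $\cE_{X,Y}$ holds, then every edge eased at every $v \in X$ must meet $(X \cup Y)\sm\{v\}$. The number of edges through $v$ meeting $(X \cup Y)\sm\{v\}$ is at most $(x+y-1)\binom{n-2}{k-2} \le 3x\binom{n-2}{k-2}$, so conditional on any history, the probability that a single easing at $v$ gives such a ``bad'' edge is at most
$$p := \frac{3x\binom{n-2}{k-2}}{(1-\e')N} \le \frac{3xk}{(1-\e')(n-1)}.$$
By iterated conditional expectation over the $mx$ easings at vertices of $X$, one gets $\Pr(\cE_{X,Y}) \le p^{mx}$.

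Union-bounding over the at most $\binom{n}{x}\binom{n}{<2x} \le (en/x)^{3x}$ such pairs with $|X|=x$ and summing over $x$,
$$\Pr(\cH\text{ is not a }(r,2)\text{-expander}) \le \sum_{x=1}^{r}\left[\frac{(en)^3}{x^3}\left(\frac{3xk}{(1-\e')(n-1)}\right)^m\right]^x = \sum_{x=1}^{r}\left[C\cdot (x/n)^{m-3}\right]^x,$$
for a constant $C=C(k,\e')$. For $\e$ small and $m$ large (depending on $k,\e'$), a short calculation of the derivative of $x\mapsto\log[C(x/n)^{m-3}]^x$ shows the summand is decreasing on $[1,r]$, so the whole sum is bounded by $r$ times the $x=1$ term, which is $O(n\cdot n^{-(m-3)}) = o(1)$ for $m\ge 5$.

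The main obstacle is the conditional-independence step: the $mx$ random easings at vertices of $X$ share Breaker's intermediate responses as common history, so they are not literally independent. The key observation is that the bound $p$ on the conditional bad-draw probability holds uniformly in the history, so iterated conditional expectations still yield the clean bound $p^{mx}$. After that, it is a routine union bound, with the parameters $\e, \e', m$ tuned so that the resulting sum vanishes.
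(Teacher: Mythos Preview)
Your proof is correct and follows essentially the same approach as the paper's: bound the probability that each easing at a vertex of $X$ lands in $X\cup Y$, then take a union bound over $(X,Y)$. You are more explicit than the paper on the conditional-independence step (the paper simply multiplies the per-step bounds without comment), and your final summation is argued via monotonicity rather than the paper's bare ``$=o(1)$''; one small quibble is that the monotonicity claim requires $m$ large depending on $\e$ as well as $k,\e'$, and you need not assume $\e$ small since any $\e\in(0,1)$ works once $m$ is chosen large enough.
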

\begin{proof}[Proof of Lemma \ref{lem:expander}]
$m$ is a constant and so Maker will win the Minimum Degree Game before any significant proportion of edges are taken by the players. Fix some $X, Y$ with $|X|=x\le \e n/10k$ and $|Y|=2x$. On each step when we ease a vertex in $X$, the probability of choosing an edge that contains an additional vertex (other than the one we are easing) in $X \cup Y$ is at most $\sim 3x \binom{n}{k-2} / (1-\a)N < 3k x / 2\e n$. 

By the union bound, the probability that there exist some $X, Y$ such that $\cH$ does not have any edge $e$ with exactly one vertex in $X$ and no vertices in $Y$ is at most 
\begin{align*}
    \sum_{x = 1}^{\e n/10k} \binom{n}{x} \binom{n}{2x} \brac{\frac{3kx}{2\e n}}^{mx} & \le \sum_{x = 1}^{\e n/10k} \brac{\frac{ne}{x}}^{3x} \brac{\frac{3kx}{2\e n}}^{mx} =o(1)
\end{align*}
\end{proof}

Maker's strategy is as follows. First play the Minimum Degree Game from Lemma \ref{lem:expander} to obtain $\cH$. Since $\cH$ is a $(\e n/10k, 2)$-expander, each connected component has at least $\e n/10k$ vertices, so there are at most $10k / \e$ components. After winning the Minimum Degree Game, Maker's next goal is to choose edges to connect $\cH$, which takes at most $10k /\e$ turns. Now $\cH$ is a connected $(\e n/10k, 2)$-expander, and Lemma \ref{lem:boosters} tells us there are $\Om(n^{k})$ boosters. We are still so early in the game that only $o(n^k)$ edges have been taken by the players, so almost all the boosters are available. Now Maker chooses a booster every turn for at most $n$ turns, winning the game. This proves Theorem \ref{thBHC}.
 \section{Perfect Matchings and Hamilton $\ell$-cycles}\label{sec:MBPM}
 {\bf Maker's Strategy for Theorem \ref{MBPM}:} Maker starts by severely restricting the board. Let $[n]$ be the vertex set of the board. Let $A_i=\set{(k-\ell - 1)(i-1)+1,(k-\ell -1)(i-1)+2,\ldots,(k-\ell-1)i+\ell}$ for $i=1,2,\ldots,D=\frac{n}{k-\ell}$. The elements of $A_i$ are taken modulo $n-D$, and so the $A_i$ form a $(k-1)$-uniform Hamilton $\ell$-cycle on the vertex set $\set{1, \ldots, n-D}$. (When $\ell=0$ this is a perfect matching.) Let $B=\{n-D+1,$ $n-D+2,\ldots,n\}$ be the vertices not in any $A_i$. Maker will restrict the board by only taking edges of the form $A_i \cup \{x\}$ where $x \in B$. Breaker is of course free to take other edges but this will not matter. 
 
 Now let $\G$ be the complete bipartite graph with vertex sets $A=\set{A_1,A_2,\ldots,A_D}$. To be clear, $|A|=|B|=D$. Each edge $(A_i,j)$ of $\G$ corresponds to the edge $A_i\cup\{j\}$ of $H_{n,k}$. A perfect matching $\set{(A_i,\f(i)),i=1,2,\ldots,D}$ corresponds to Hamilton $\ell$-cycle $\set{E_i\cup\set{\f(i)},i=1,2,\ldots,D}$ in $H$.
 
 Maker will make her choices as if she were playing the minimum degree game on $\G$ with $m=10,\a=1/11$. If she wants to ease $E_i\in A$ then she chooses $v\in E_i$ and a random $w\in B$ such that the edge $X=E_i\cup \set{w}$ is currently available and adds $X$ to $E(H_M)$. Similarly, if she wants to ease $w\in B$ then she chooses a random $i$ such that the edge $X=E_i\cup \set{w}$ is currently available and adds $X$ to $E(H_M)$. We claim that w.h.p. Maker claims a perfect matching in $\G$.
 
 We apply Hall's theorem. We show first that if $\cB_1=\set{\exists S\subseteq A,|S|\leq D/2,|N_\G(S)|<|S|}$ then 
 \beq{Hall1}{
 \Pr(\cB_1)\leq \sum_{s=5}^{D/2}\binom{D}{s}^2\bfrac{11s}{10D}^{10s}\leq \sum_{s=5}^{D/2}\brac{\frac{D^2e^2}{s^2}\cdot\bfrac{ 11s}{10D}^{10}}^s=o(1).
 }
 {\bf Explanation:} fix sets $S\subseteq A,T\subseteq B$ of size $s$. Then the probability that Maker chooses $w\in T$ when easing $v\in E\in S$ is at most $\frac{s}{(1-\a)D}=\frac{11s}{10D}$.
 
 If $|S|>D/2$ and $|N_\G(S)|<|S|$ then $|N_\G(B\setminus N_\G(S))|<|B\setminus N_\G(S)|$ and we proceed as in \eqref{Hall1}. This completes the proof of Theorem \ref{MBPM}. 
 
 Now consider Theorem \ref{RMBPM}. We have a set of colors $Q=\{c_1, \ldots, c_D\}$, and whenever Maker takes an edge she will assign it the color $c_i$ where $n-D+i$ is the vertex in $B$ which Maker's edge contains. Thus our Hamilton $\ell$-cycle will be rainbow and the rest of the proof follows that of the uncolored version.
\section{Open questions}
While some of our results are tight, there are several open questions that immediately spring to mind. Theorem \ref{thmatrix} could probably be extended to fields other than $GF_2$. The bounds on the bias $b$ seem quite weak in Theorems \ref{MBPM}, \ref{RMBPM} and they do not include the cases where $k/2\leq \ell<k$.

\end{document}